\begin{document}
\newtheorem{The}{Theorem}[section]
\newtheorem{prop}[The]{Proposition}
\newtheorem{lem}[The]{Lemma}
\newtheorem{cor}[The]{Corollary}

\newtheorem{rem}[The]{Remark}
\newtheorem{example}[The]{Example}

\numberwithin{equation}{section}

\title{A note on rational surgeries on a Hopf link}

\author{\|Velibor |Bojkovi\' c|, Caen,
        \|Jovana |Nikoli\' c|, Beograd, 
        \|Mladen |Zeki\' c|, Beograd}

\abstract 
It is clear that every rational surgery on a Hopf link in $3$-sphere is a lens space surgery. In this note we give an explicit computation which lens space is a resulting manifold. The main tool we use is the calculus of continued fractions. As a corollary, we recover the (well known) result on the criterion for when rational surgery on a Hopf link gives the $3$-sphere.
\endabstract

\keywords
   continued fraction, Hopf link, lens space, rational surgery, Rolfsen moves
\endkeywords

\subjclass
57K30, 57R65, 11A55
\endsubjclass

\thanks
   The second and third authors were supported by the Science Fund of the Republic of Serbia, GRANT No 7749891, Graphical Languages - GWORDS.
\endthanks

\section{Introduction}

It is well known that every closed, orientable 3-manifold can be obtained by Dehn surgery on a framed link in $\mathbb{S}^3$. In \cite{Kir}, Kirby gave an answer when two framed links in $\mathbb{S}^3$ produce homeomorphic 3-manifolds, using what is nowdays called Kirby integral calculus (which pertains to surgery with integral coefficients). However, preceding these results, Rolfsen in \cite{R76} provided more general framework for surgery on links with rational coefficients (to which refer in this paper as  rational or Rolfsen calculus) and in \cite{Rolf} generalized the main result of \cite{Kir}. The advantage of using surgery with rational coefficients is that we can simplify surgery presentation of a 3-manifold. For example, the homology sphere resulting from surgery on a trefoil with coefficient $\frac{1}{n}$, where $n$ is an integer, can also be constructed using an integral surgery but one has to work with more complicated links (see Example in~\cite{Rolf} and Figure \ref{Figure:ComplicatedLinks}). For more details about surgery of 3-manifolds and Kirby calculus see \cite[Chapter VI]{PS96}, and for presentation of the rational surgery we refer to \cite[Section 9.H]{R76} and \cite{Rolf}.

Let us recall that a surgery along a link is called \textit{a lens space surgery} if the result is a lens space (we will consider manifolds $\mathbb{S}^3$ and $\mathbb{S}^2\times \mathbb{S}^1$ as (trivial) special cases of lens spaces and denote them by $L(1,0)$ and $L(0,1)$, respectively). In this note a Hopf link denotes two-component link constisting of two unknots linked together once. It is denoted as $2^2_1$ link in \cite[Appendix C, Table of knots and links]{R76} and sometimes also refered to as $L2a1$ link.

It is known that a surgery on a Hopf link with framing $m\in\mathbb{Z}$ and $\frac{p}{q}\in\mathbb{Q}$ (see Figure \ref{Figure:HopfLink}) is a lens space surgery and the 3-manifold we obtain is $L(a,b)$ where $\frac{a}{b}=m-\frac{q}{p}$ (see Proposition 17.3. in~\cite{PS96} for more general statement). Note that this transformation of a $2$-component surgery to a $1$-component surgery (with coefficients as above) is well known under the name \textit{slam-dunk move} (see \cite[The Slam-Dunk Theorem]{CG88} and \cite[Section 5.3]{GS99}). 
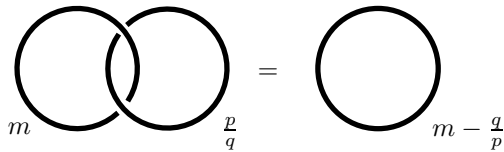
\begin{figure}[!ht]
\centering 
\psset{xunit=0.4cm,yunit=0.4cm,algebraic=true,dimen=middle,dotstyle=o,dotsize=3pt 0,linewidth=0.8pt,arrowsize=3pt 2,arrowinset=0.25}
\begin{pspicture*}(-3.49,-2)(15.5,3.4)
\parametricplot[linewidth=2pt]{-3.758972595266308}{2.321589552819717}{1*1.98*cos(t)+0*1.98*sin(t)+4|0*1.98*cos(t)+1*1.98*sin(t)+1}
\parametricplot[linewidth=2pt]{-0.5735991128384956}{5.457219563177892}{1*2*cos(t)+0*2*sin(t)+1|0*2*cos(t)+1*2*sin(t)+1}
\pscircle[linewidth=2pt](11,1){0.8}
\rput[tl](-1.3,-0.8){$m$}
\rput[tl](7,1){$=$}
\rput[tl](5.8,-0.6){$\frac{p}{q}$}
\rput[tl](12.8,-0.6){$m-\frac{q}{p}$}
\end{pspicture*}
\caption{Hopf link with framing $m\in\mathbb{Z}$ and $\frac{p}{q}\in\mathbb{Q}$} 
\label{Figure:HopfLink}
\end{figure}

We generalize this result to the case when surgery coefficients on both components of a Hopf link are rational numbers (see Figure \ref{Figure:HopfLinkRational1}). To state our main result and for the purposes of this note, we will say that \textit{a continued fraction} is an expression of the form $[a_0;a_1,\dots,a_l]$, where $a_0,\dots, a_l\in \mathbb{Z}$. If we adjoin the symbol $\frac{1}{0}$ (which we will sometimes denote by $\infty$) to $\mathbb{Q}$ and define $\frac{1}{0}+r=\frac{1}{0}$ and $\frac{r}{\frac{1}{0}}=0$, $r\in \mathbb{Q}$), then any continued fraction as before represents an element $r$ (we write $r=[a_0;a_1,\dots,a_l]$), where $r\in \mathbb{Q}\cup\{\frac{1}{0}\}$ is defined as:
\begin{enumerate}
\item[(1)] If $l=0$, then $r=a_0$;
\item[(2)] If $l>0$, then $r=a_0+\frac{1}{[a_1;\dots,a_l]}$.
\end{enumerate}
Then, our main result is:

\begin{The}\label{Theorem:Basic} Rational surgery on a Hopf link with framing $\frac{p}{q}=[a_0;a_1,\hdots,a_n]$ and $\frac{r}{s}=[b_0;b_1,\hdots;b_m+1]$ is a lens space surgery and the resulting lens space is $L(a,b)$ where 
$$\frac{a}{b}=[-b_m;\hdots,-b_0, a_0,\hdots,a_n]^{(-1)^{m+1}}-1,$$
(if $\frac{a}{b}\in\mathbb{Q}$, we take $a$ and $b$  coprime, $b>0$).
\end{The}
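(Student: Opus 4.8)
The plan is to turn the two–component rational surgery into a single integer–framed unknot by iterating the slam–dunk move recalled above, and then to read off the lens space from the resulting coefficient. I write $[\,\cdot\,]^{+}$ for the additive continued fraction of the statement and $[c_0,c_1,\dots,c_k]^{-}$ for the \emph{subtractive} one, obtained from the same recursion with each $+$ replaced by $-$. Two elementary identities, each proved by a one–line induction on the length, do all the algebraic work: the negation rule $[-\beta_0;-\beta_1,\dots,-\beta_k]^{+}=-[\beta_0;\beta_1,\dots,\beta_k]^{+}$, and the bridge $[\beta_0;\beta_1,\dots,\beta_k]^{+}=[\beta_0,-\beta_1,\beta_2,\dots,(-1)^{k}\beta_k]^{-}$ between the two conventions. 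I also use the standard facts that surgery on a single unknot with coefficient $\rho$ gives $L(a,b)$ with $\tfrac{a}{b}=\rho$, and that collapsing a linear chain of unknots by iterated slam–dunks produces the subtractive continued fraction of its framings, read from the surviving end.

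First I would expand the component carrying $\frac{r}{s}=[b_0;b_1,\dots,b_m+1]^{+}$ into a linear chain of unknots $B_0-B_1-\dots-B_m$, consecutive members forming Hopf clasps, with integer framings $(-1)^{i}b_i$ for $i<m$ and $(-1)^{m}(b_m+1)$ for $i=m$, arranged so that the end $B_0$ occupies the original position and is still clasped to the $\frac{p}{q}$–component $A$. This expansion is the inverse of the slam–dunk, and the bridge identity guarantees that collapsing the chain back onto $B_0$ returns the coefficient $\frac{r}{s}$; its purpose is that $B_0$ now carries the \emph{integer} framing $b_0$. A single slam–dunk at the clasp $A-B_0$ then removes the rational meridian $A$ and changes the framing of $B_0$ to $b_0-\tfrac{q}{p}=[b_0;-a_0,\dots,-a_n]^{+}$, leaving the linear chain $B_0-\dots-B_m$ and nothing else. (When $m=0$ the chain is the single unknot $B_0$ of framing $b_0+1$, and this step is exactly the slam–dunk of the hypothesis, which one checks against the formula directly.)

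Next I would collapse this chain from the $B_0$ end by repeated slam–dunks, each absorbing the current rational endpoint into its integer neighbour, until only $B_m$ survives; its coefficient is the subtractive continued fraction $[(-1)^{m}(b_m+1),(-1)^{m-1}b_{m-1},\dots,-b_1,\,b_0-\tfrac{q}{p}]^{-}$, and surgery on a single unknot with this coefficient is the sought lens space. Passing back to the additive convention by the bridge, splicing in the expansion $b_0-\tfrac{q}{p}=[b_0;-a_0,\dots,-a_n]^{+}$ of the last entry, and extracting the common factor $(-1)^{m}$ by the negation rule gives
\[
\frac{a}{b}=(-1)^{m}\,[\,b_m+1;\,b_{m-1},\dots,b_1,b_0,-a_0,\dots,-a_n\,]^{+}.
\]
For $m$ odd the negation rule distributes the sign onto every entry and the leading term absorbs the constant, producing $[-b_m;\dots,-b_0,a_0,\dots,a_n]^{+}-1$ on the nose. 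For $m$ even the displayed number equals $S+1$ while the claimed value equals $-1-\frac{1}{S}$, where $S=[b_m;b_{m-1},\dots,b_0,-a_0,\dots,-a_n]^{+}$ and $[-b_m;\dots,-b_0,a_0,\dots,a_n]^{+}=-S$ by the negation rule; writing $S=u/v$ these are $\tfrac{u+v}{v}$ and $\tfrac{-(u+v)}{u}$, which name the same oriented lens space because $L(p,q)=L(-p,-q)=L(p,q+p)$.

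The topological content is light: both the expansion and the collapse are the single stated slam–dunk performed locally at one clasp at a time, and one need only observe that at each stage the relevant meridian disk is pierced exactly once, which is automatic in a linear chain. The real work, and the main obstacle, is the continued–fraction bookkeeping — keeping the alternating signs of the expansion straight, justifying the splicing and the leading–term shift, and above all the parity analysis: the reciprocal exponent $(-1)^{m+1}$ and the trailing $-1$ are precisely the normalisations that, together with $L(p,q)=L(-p,-q)=L(p,q+p)$, force the reduced coefficient (with $b>0$) to name the correctly oriented lens space in both parities. Finally, the degenerate values $\tfrac{1}{0}=\infty$ must be tracked through every step so that the cases $L(1,0)$ and $L(0,1)$ are covered.
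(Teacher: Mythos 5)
Your proposal is correct, but it takes a genuinely different route from the paper's. The paper never alters the underlying link until the final step: it acts on the two framings by Rolfsen moves of the second kind (full twists along one component of the Hopf link), encodes their effect through the operations $R_{\pm 1,m}$ and a small continued-fraction calculus (Lemma 2.3 and its corollaries), and chooses twist coefficients $-b_0,\dots,-b_m$ so that the $\frac{r}{s}$-framing becomes exactly $1$ --- this is precisely why the hypothesis writes the last entry as $b_m+1$ --- after which one $(-1)$-twist turns that framing into $\infty$, the component is erased, and the surviving framing is exactly the stated $[-b_m;\dots,-b_0,a_0,\dots,a_n]^{(-1)^{m+1}}-1$. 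You instead change the link itself: inverse slam-dunks expand the $\frac{r}{s}$-component into an integer-framed chain (with the alternating signs dictated by your bridge identity), one slam-dunk absorbs the $\frac{p}{q}$-component, and the chain is collapsed back to a single unknot. Two remarks. First, you are invoking the local form of the slam-dunk, where the surviving component must have integer coefficient but may link further components (Gompf--Stipsicz, Section 5.3, or Cochran--Gompf), not merely the isolated two-component version recalled in the paper's introduction; your construction does satisfy the integer-framing hypothesis at every stage, but this should be said explicitly. Second, since your collapse yields $(-1)^m[b_m+1;b_{m-1},\dots,b_0,-a_0,\dots,-a_n]$ rather than the theorem's fraction, your argument proves the theorem only modulo the identifications $L(p,q)\cong L(-p,-q)\cong L(p,q+p)$; your parity analysis (for $m$ even, $\frac{u+v}{v}$ versus $\frac{-(u+v)}{u}$) is correct and consistent with the paper's own conventions --- on the paper's first example your formula gives $\frac{431}{174}$ directly, while the paper's gives $-\frac{431}{257}$ and then invokes the same congruence $q\equiv q'\pmod{p}$ --- but the paper's route needs no such identification at all. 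What your approach buys is that the reversed word $-b_m,\dots,-b_0,a_0,\dots,a_n$ and the exponent $(-1)^{m+1}$ become transparent from the standard chain picture; what the paper's buys is a self-contained twist calculus that lands on the stated formula on the nose.
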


\vspace{-0.3cm}
\begin{figure}[!ht]
\psset{xunit=0.4cm,yunit=0.4cm,algebraic=true,dimen=middle,dotstyle=o,dotsize=3pt 0,linewidth=0.8pt,arrowsize=3pt 2,arrowinset=0.25}
\begin{pspicture*}(-3.49,-1.5)(11,3.2)
\parametricplot[linewidth=2pt]{-3.758972595266308}{2.321589552819717}{1*1.98*cos(t)+0*1.98*sin(t)+4|0*1.98*cos(t)+1*1.98*sin(t)+1}
\parametricplot[linewidth=2pt]{-0.5735991128384956}{5.457219563177892}{1*2*cos(t)+0*2*sin(t)+1|0*2*cos(t)+1*2*sin(t)+1}
\rput[tl](-1.3,-0.2){$\frac{p}{q}$}
\rput[tl](5.8,-0.2){$\frac{r}{s}$}
\rput[tl](6.4,1.25){$=$}
\rput[tl](7.4,1.6){$L(a,b)$}
\end{pspicture*}
\caption{Hopf link with rational surgery on both unknots} 
\label{Figure:HopfLinkRational1}
\end{figure}
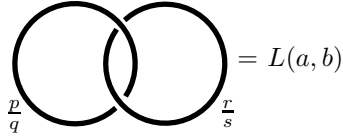

\begin{rem} {\rm The numbers $a$ and $b$ are not uniquely determined by $\frac{p}{q}$ and $\frac{r}{s}$. For example, if we take $\frac{p}{q}=2=[2]$ and $\frac{r}{s}=\frac{3}{2}=[1;2]=[2;-2]$, applying the theorem for the two representations of $\frac{r}{s}$, we obtain $\frac{a}{b}=\frac{-4}{1}$ and $\frac{a}{b}=\frac{4}{3}$, respectively.}
\end{rem}

Note that a rational surgery along a Hopf chain is not necessarily a lens space surgery. The example we mentioned in the beginning says that the homology sphere obtained via $\frac{1}{n}-$surgery along trefoil in $\mathbb{S}^3$ (which is not a lens space) is homeomorphic to the manifold obtained via surgery on a 3-component Hopf chain with coefficients $-3,-\frac{1}{2}$ and $\frac{1-6n}{n}$ (see Figure \ref{Figure:ComplicatedLinks} and Example in~\cite{Rolf}). 

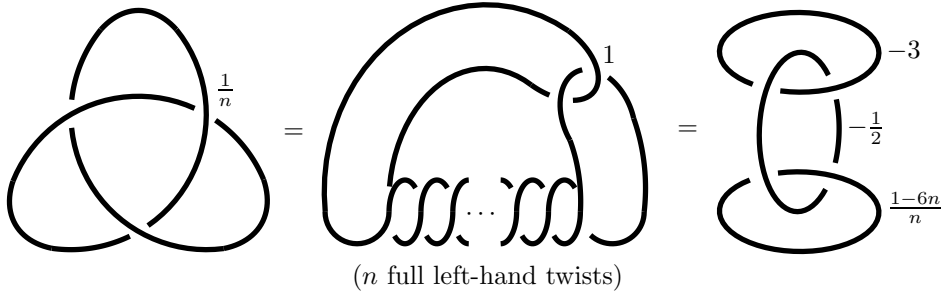
\begin{figure}[!ht]
\begin{center}
\begin{tikzpicture}[scale=0.6][line cap=round,line join=round,x=1.0cm,y=1.0cm]
\draw [shift={(0.9999999999999987,-0.999999999999996)},line width=2.0pt]  plot[domain=0.3939297389921699:0.9660899232838431,variable=\t]({1.0*2.999999999999997*cos(\t r)+-0.0*2.999999999999997*sin(\t r)},{0.0*2.999999999999997*cos(\t r)+1.0*2.999999999999997*sin(\t r)});
\draw [shift={(0.9999999999999987,-0.999999999999996)},line width=2.0pt]  plot[domain=1.144596485081734:2.760182933829636,variable=\t]({1.0*2.999999999999997*cos(\t r)+-0.0*2.999999999999997*sin(\t r)},{0.0*2.999999999999997*cos(\t r)+1.0*2.999999999999997*sin(\t r)});
\draw [shift={(-0.5000000000000022,1.598076211353317)},line width=2.0pt]  plot[domain=-0.9646773216065849:0.678318669034815,variable=\t]({1.0*3.0000000000000027*cos(\t r)+-0.0*3.0000000000000027*sin(\t r)},{0.0*3.0000000000000027*cos(\t r)+1.0*3.0000000000000027*sin(\t r)});
\draw [shift={(2.5000000000000013,1.598076211353317)},line width=2.0pt]  plot[domain=2.463273984554978:3.0332029277416814,variable=\t]({1.0*3.000000000000001*cos(\t r)+-0.0*3.000000000000001*sin(\t r)},{0.0*3.000000000000001*cos(\t r)+1.0*3.000000000000001*sin(\t r)});
\draw [shift={(2.5000000000000013,1.598076211353317)},line width=2.0pt]  plot[domain=3.2415543095310193:4.8420580169908165,variable=\t]({1.0*3.000000000000001*cos(\t r)+-0.0*3.000000000000001*sin(\t r)},{0.0*3.000000000000001*cos(\t r)+1.0*3.000000000000001*sin(\t r)});
\draw [shift={(-0.5000000000000022,1.598076211353317)},line width=2.0pt]  plot[domain=4.570199924546549:5.162555393670436,variable=\t]({1.0*3.0000000000000027*cos(\t r)+-0.0*3.0000000000000027*sin(\t r)},{0.0*3.0000000000000027*cos(\t r)+1.0*3.0000000000000027*sin(\t r)});
\draw [shift={(1.0000222232937448,1.3021201651520649)},line width=2.0pt]  plot[domain=2.563370686816426:3.719796285787093,variable=\t]({1.5591629334087065E-5*2.6012710786393898*cos(\t r)+0.9999999998784506*1.5294482208584166*sin(\t r)},{-0.9999999998784506*2.6012710786393898*cos(\t r)+1.5591629334087065E-5*1.5294482208584166*sin(\t r)});
\draw [shift={(0.30931227850565135,0.3337979345566995)},line width=2.0pt]  plot[domain=2.524624488412149:3.7580739377417633,variable=\t]({0.8658323388461026*2.3557691775981904*cos(\t r)+-0.5003342492857029*1.4857211778984538*sin(\t r)},{0.5003342492857029*2.3557691775981904*cos(\t r)+0.8658323388461026*1.4857211778984538*sin(\t r)});
\draw [shift={(1.6663976313762991,0.34698361637494374)},line width=2.0pt]  plot[domain=2.515726139911581:3.7671484886127042,variable=\t]({-0.8661474875708983*2.3684362351760284*cos(\t r)+-0.49978848503604034*1.5064796143369639*sin(\t r)},{0.49978848503604034*2.3684362351760284*cos(\t r)+-0.8661474875708983*1.5064796143369639*sin(\t r)});
\draw (2.5,2.7) node[anchor=north west] {$\frac{1}{n}$};
\draw (4,1.5) node[anchor=north west] {$=$};
\draw (2,-2.1) node[anchor=north west] {$\;$};
\end{tikzpicture} 
\begin{tikzpicture}[scale=0.212][line cap=round,line join=round,x=1.0cm,y=1.0cm]
\draw [shift={(9.0,1.0)},line width=2.0pt]  plot[domain=-0.7227005154289552:1.3495276921745412E-4,variable=\t]({6.747638431685932E-5*2.0000000136591907*cos(\t r)+-0.9999999977234688*0.9999999982926034*sin(\t r)},{0.9999999977234688*2.0000000136591907*cos(\t r)+6.747638431685932E-5*0.9999999982926034*sin(\t r)});
\draw [shift={(9.0,1.0)},line width=2.0pt]  plot[domain=3.1417276063590105:4.71242271857824,variable=\t]({6.747638431685932E-5*2.0000000136591907*cos(\t r)+-0.9999999977234688*0.9999999982926034*sin(\t r)},{0.9999999977234688*2.0000000136591907*cos(\t r)+6.747638431685932E-5*0.9999999982926034*sin(\t r)});
\draw [shift={(11.0,1.0000000000000002)},line width=2.0pt]  plot[domain=-0.7227005154289126:1.5708300649851559,variable=\t]({6.747638434322898E-5*2.0000000136591654*cos(\t r)+-0.9999999977234688*0.9999999982925908*sin(\t r)},{0.9999999977234688*2.0000000136591654*cos(\t r)+6.747638434322898E-5*0.9999999982925908*sin(\t r)});
\draw [shift={(11.0,1.0000000000000002)},line width=2.0pt]  plot[domain=2.4133772617930163:4.712422718574949,variable=\t]({6.747638434322898E-5*2.0000000136591654*cos(\t r)+-0.9999999977234688*0.9999999982925908*sin(\t r)},{0.9999999977234688*2.0000000136591654*cos(\t r)+6.747638434322898E-5*0.9999999982925908*sin(\t r)});
\draw [shift={(13.0,0.9999999999999999)},line width=2.0pt]  plot[domain=2.4117708026239186:4.712422718568368,variable=\t]({6.747638216353608E-5*2.000000013659158*cos(\t r)+-0.999999997723469*0.9999999982925872*sin(\t r)},{0.999999997723469*2.000000013659158*cos(\t r)+6.747638216353608E-5*0.9999999982925872*sin(\t r)});
\draw [shift={(13.0,0.9999999999999999)},line width=2.0pt]  plot[domain=-0.7227005154307786:1.5708300649785745,variable=\t]({6.747638216353608E-5*2.000000013659158*cos(\t r)+-0.999999997723469*0.9999999982925872*sin(\t r)},{0.999999997723469*2.000000013659158*cos(\t r)+6.747638216353608E-5*0.9999999982925872*sin(\t r)});
\draw [shift={(3.0000000000000004,0.9999999999999996)},line width=2.0pt]  plot[domain=-0.7227005154289454:1.570830064995028,variable=\t]({6.747638432584898E-5*2.00000001365919*cos(\t r)+-0.9999999977234688*0.999999998292603*sin(\t r)},{0.9999999977234688*2.00000001365919*cos(\t r)+6.747638432584898E-5*0.999999998292603*sin(\t r)});
\draw [shift={(3.0000000000000004,0.9999999999999996)},line width=2.0pt]  plot[domain=2.42064245570911:4.71242271857824,variable=\t]({6.747638432584898E-5*2.00000001365919*cos(\t r)+-0.9999999977234688*0.999999998292603*sin(\t r)},{0.9999999977234688*2.00000001365919*cos(\t r)+6.747638432584898E-5*0.999999998292603*sin(\t r)});
\draw [shift={(5.000000000000001,1.0000000000000002)},line width=2.0pt]  plot[domain=-0.7227005154289472:1.5708300649884466,variable=\t]({6.747638432644829E-5*2.0000000136591924*cos(\t r)+-0.9999999977234688*0.9999999982926043*sin(\t r)},{0.9999999977234688*2.0000000136591924*cos(\t r)+6.747638432644829E-5*0.9999999982926043*sin(\t r)});
\draw [shift={(5.000000000000001,1.0000000000000002)},line width=2.0pt]  plot[domain=2.4190228062475083:4.71242271857824,variable=\t]({6.747638432644829E-5*2.0000000136591924*cos(\t r)+-0.9999999977234688*0.9999999982926043*sin(\t r)},{0.9999999977234688*2.0000000136591924*cos(\t r)+6.747638432644829E-5*0.9999999982926043*sin(\t r)});
\draw [shift={(7.0,1.0000000000000002)},line width=2.0pt]  plot[domain=1.3495276933059384E-4:1.5708300649851559,variable=\t]({6.747638432405105E-5*2.0000000136591836*cos(\t r)+-0.9999999977234688*0.9999999982925998*sin(\t r)},{0.9999999977234688*2.0000000136591836*cos(\t r)+6.747638432405105E-5*0.9999999982925998*sin(\t r)});
\draw [shift={(7.0,1.0000000000000002)},line width=2.0pt]  plot[domain=2.4174061234267983:3.141727606359046,variable=\t]({6.747638432405105E-5*2.0000000136591836*cos(\t r)+-0.9999999977234688*0.9999999982925998*sin(\t r)},{0.9999999977234688*2.0000000136591836*cos(\t r)+6.747638432405105E-5*0.9999999982925998*sin(\t r)});
\draw [shift={(-0.006082265080781609,1.0060822650807817)},line width=2.0pt]  plot[domain=3.144643053367686:6.280153404375947,variable=\t]({1.0*2.0060914855061203*cos(\t r)+-0.0*2.0060914855061203*sin(\t r)},{0.0*2.0060914855061203*cos(\t r)+1.0*2.0060914855061203*sin(\t r)});
\draw [shift={(15.999999999999993,1.000000000000007)},line width=2.0pt]  plot[domain=-2.3202107820269675:0.0,variable=\t]({1.0*2.000000000000007*cos(\t r)+-0.0*2.000000000000007*sin(\t r)},{0.0*2.000000000000007*cos(\t r)+1.0*2.000000000000007*sin(\t r)});
\draw [shift={(8.010764157855258,2.399146686545083)},line width=2.0pt]  plot[domain=4.618609473751706:5.13544622969049,variable=\t]({0.031228795445083578*11.60281936548135*cos(\t r)+-0.9995122622234555*10.072315903444755*sin(\t r)},{0.9995122622234555*11.60281936548135*cos(\t r)+0.031228795445083578*10.072315903444755*sin(\t r)});
\draw [shift={(15.08453928446209,6.849010726655737)},line width=2.0pt]  plot[domain=4.443267692073094:5.428252444126764,variable=\t]({-0.3913184311892814*3.233644093673443*cos(\t r)+-0.9202553370731189*2.1624545303792773*sin(\t r)},{0.9202553370731189*3.233644093673443*cos(\t r)+-0.3913184311892814*2.1624545303792773*sin(\t r)});
\draw [shift={(15.08453928446209,6.849010726655737)},line width=2.0pt]  plot[domain=-0.14246391152786675:1.747149063276433,variable=\t]({-0.3913184311892814*3.233644093673443*cos(\t r)+-0.9202553370731189*2.1624545303792773*sin(\t r)},{0.9202553370731189*3.233644093673443*cos(\t r)+-0.3913184311892814*2.1624545303792773*sin(\t r)});
\draw [shift={(8.010764157855258,2.399146686545083)},line width=2.0pt]  plot[domain=-0.5104239927679144:1.7188079367541982,variable=\t]({0.031228795445083578*11.60281936548135*cos(\t r)+-0.9995122622234555*10.072315903444755*sin(\t r)},{0.9995122622234555*11.60281936548135*cos(\t r)+0.031228795445083578*10.072315903444755*sin(\t r)});
\draw [shift={(12.037982850508092,10.697295259545422)},line width=2.0pt]  plot[domain=1.6147118343596758:1.9914795457959102,variable=\t]({-0.796755471056258*3.5461922194411764*cos(\t r)+-0.6043018445627322*2.058068182853418*sin(\t r)},{0.6043018445627322*3.5461922194411764*cos(\t r)+-0.796755471056258*2.058068182853418*sin(\t r)});
\draw [shift={(12.037982850508092,10.697295259545422)},line width=2.0pt]  plot[domain=2.4905157659517227:4.725219727865332,variable=\t]({-0.796755471056258*3.5461922194411764*cos(\t r)+-0.6043018445627322*2.058068182853418*sin(\t r)},{0.6043018445627322*3.5461922194411764*cos(\t r)+-0.796755471056258*2.058068182853418*sin(\t r)});
\draw [shift={(8.00184423223843,1.6155109705511068)},line width=2.0pt]  plot[domain=4.643344290921176:5.1976977495899535,variable=\t]({0.006175810708105875*8.384650832558677*cos(\t r)+-0.9999809294992068*6.0161770300302955*sin(\t r)},{0.9999809294992068*8.384650832558677*cos(\t r)+0.006175810708105875*6.0161770300302955*sin(\t r)});
\draw [shift={(8.00184423223843,1.6155109705511068)},line width=2.0pt]  plot[domain=-0.4977154342116883:1.4588217658016456,variable=\t]({0.006175810708105875*8.384650832558677*cos(\t r)+-0.9999809294992068*6.0161770300302955*sin(\t r)},{0.9999809294992068*8.384650832558677*cos(\t r)+0.006175810708105875*6.0161770300302955*sin(\t r)});
\draw (6.2,1.8) node[anchor=north west] {$\cdots$};
\draw (14.8,12) node[anchor=north west] {$1$};
\draw (19.5,7) node[anchor=north west] {$=$};
\draw (-0.9,-1.7) node[anchor=north west] {($n$ full left-hand twists)};
\end{tikzpicture}
\psset{xunit=0.53cm,yunit=0.53cm,algebraic=true,dimen=middle,dotstyle=o,dotsize=3pt 0,linewidth=0.8pt,arrowsize=3pt 2,arrowinset=0.25}
\begin{pspicture*}(-1.1,-3.2)(4.6,4.1)
\parametricplot[linewidth=2pt]{-4.946762204274805}{0.95392489822264}{-1*2*cos(t)+-0.01*1*sin(t)+1|0.01*2*cos(t)+-1*1*sin(t)+3}
\parametricplot[linewidth=2pt]{4.328585602502571}{5.149303185962962}{0.03*2*cos(t)+-1*1*sin(t)+1|1*2*cos(t)+0.03*1*sin(t)+1}
\parametricplot[linewidth=2pt]{-0.7625790658169525}{3.92520729608574}{0.03*2*cos(t)+-1*1*sin(t)+1|1*2*cos(t)+0.03*1*sin(t)+1}
\parametricplot[linewidth=2pt]{-0.8976578170467322}{4.986540318021191}{-1*2*cos(t)+-0.01*1*sin(t)+1|0.01*2*cos(t)+-1*1*sin(t)+-1}
\rput[tl](3.2,3.3){$-3$}
\rput[tl](2.2,1.5){$-\frac{1}{2}$}
\rput[tl](3.2,-0.5){$\frac{1-6n}{n}$}
\end{pspicture*}
\end{center}
\caption{Equivalent surgery presentations of the homology sphere}
\label{Figure:ComplicatedLinks}
\end{figure}

The following examples show how one can find an explicit lens space using the previous theorem.

\begin{example} {\rm Let $\frac{p}{q}=\frac{5}{2}$ and $\frac{r}{s}=\frac{109}{57}$. We have that $\frac{5}{2}=[2;2]$ and $\frac{109}{57}=[1;1,10,2,1+1]$, so Theorem \ref{Theorem:Basic} implies that
$$\frac{a}{b}=[-1;-2,-10,-1,-1,2,2]^{-1}-1=-\frac{431}{257}.$$
Since $L(p,q)\cong L(p,q')$ when $q\equiv q' \;(\mathrm{mod} \;p)$, we have that this manifold is also $L(431,174)$:}
\psset{xunit=0.4cm,yunit=0.4cm,algebraic=true,dimen=middle,dotstyle=o,dotsize=3pt 0,linewidth=0.8pt,arrowsize=3pt 2,arrowinset=0.25}
\begin{center}
\begin{pspicture*}(-3.49,-1.5)(13,3.2)
\parametricplot[linewidth=2pt]{-3.758972595266308}{2.321589552819717}{1*1.98*cos(t)+0*1.98*sin(t)+4|0*1.98*cos(t)+1*1.98*sin(t)+1}
\parametricplot[linewidth=2pt]{-0.5735991128384956}{5.457219563177892}{1*2*cos(t)+0*2*sin(t)+1|0*2*cos(t)+1*2*sin(t)+1}
\rput[tl](-1.3,-0.2){$\frac{5}{2}$}
\rput[tl](5.8,-0.2){$\frac{109}{57}$}
\rput[tl](6.4,1.25){$=$}
\rput[tl](7.4,1.6){$L(431,174).$}
\end{pspicture*}
\end{center}
\end{example}

\begin{example} {\rm Let $\frac{p}{q}=\frac{3}{2}=[1;2]$ and $\frac{r}{s}=\frac{3}{4}=[0;1,3]$. Then, $\frac{a}{b}=[-2;-1,0,1,2]^{-1}-1=\frac{1}{0}$, hence
Theorem \ref{Theorem:Basic} implies that the resulting lens space is in fact $\mathbb{S}^3=L(1,0)$.}
\end{example}

This paper is organized as follows. In Section \ref{Section:Calculus} we describe rational (or Rolfsen) moves on a Hopf link in terms of continued fractions while in Section \ref{Section:Main} we prove the main theorem. We end with an application of the above theorem and prove a (well known) criterion for when a resulting lens space is $\mathbb{S}^3$ in terms of the framings $\frac{p}{q}$ and $\frac{r}{s}$ (Corollary \ref{cor: sphere}).

\section{Rational calculus} \label{Section:Calculus}

Kirby in \cite{Kir} showed that two integral framed links represent the same 3-manifold if and only if they are related by a finite sequence of moves of two kinds which are
called \textit{Kirby moves} (see \cite[VI.19]{PS96}). In the case of rational instead of integer framings, the analog of Kirby moves are \textit{Rolfsen moves} (see \cite{Rolf}), which we define below. Let $L=L_1\cup\hdots\cup L_n$ be an $n$-component link in $\mathbb{S}^3$, where $r_i\in\mathbb{Q}\cup \{\infty\}$ is a framing of $L_i$, for every $1\leq i\leq n$. A \textit{Rolfsen move of the first kind} states that we can add or delete component of the link with framing $\infty$. Let us suppose that the $i$-th component $L_i$ of $L$ is unknotted, and let $m$ be an integer. Then we may perform $m$ full twists along $L_i$, where by $m$ full twists we mean $m$ full right-hand twists if $m\geq 0$ and $|m|$ full left-hand twists if $m<0$. Then $L_i$, $r_i$, $L_j$ and $r_j$ $(j\neq i)$ change to $L_i'$, $r_i'$, $L_j'$ and $r_j'$, respectively, where
\begin{equation} \label{2Rolfsen}
r_i'=\frac{1}{m+1/r_i}, \qquad r_j'=r_j+m\mathrm{lk}(L_i,L_j)^2
\end{equation}
($1/0=\infty$ and $1/\infty=0$) and $\mathrm{lk}(L_i,L_j)$ is the linking number of $L_i$ and $L_j$. This is a \textit{Rolfsen move of the second kind}.

In this paper, we will perform Rolfsen moves only on Hopf links. Let us suppose that $L=L_1\cup L_2$ is a Hopf link, where $r_1=\frac{p}{q}$ is a framing of $L_1$, and $r_2=\frac{r}{s}$ is a framing of $L_2$ ($p,q,r,s\in\mathbb{Z}$). Since the absolute value of the linking number of the Hopf link is $1$, equations \eqref{2Rolfsen} give rise to 
$$r_1'=\frac{1}{m+q/p}=\frac{p}{q+mp}, \qquad r_2'=\frac{r}{s}+m=\frac{r+ms}{s},$$
where we have performed $m$ full twists along the component $L_1$ (see Figure \ref{Figure:2RolfsenHopf}).

\begin{figure}[!ht]
\psset{xunit=0.4cm,yunit=0.4cm,algebraic=true,dimen=middle,dotstyle=o,dotsize=3pt 0,linewidth=0.8pt,arrowsize=3pt 2,arrowinset=0.25}
\begin{center}
\begin{pspicture*}(-3.49,-2)(9,3.4)
\parametricplot[linewidth=2pt]{-3.758972595266308}{2.321589552819717}{1*1.98*cos(t)+0*1.98*sin(t)+4|0*1.98*cos(t)+1*1.98*sin(t)+1}
\parametricplot[linewidth=2pt]{-0.5735991128384956}{5.457219563177892}{1*2*cos(t)+0*2*sin(t)+1|0*2*cos(t)+1*2*sin(t)+1}
\rput[tl](-1.3,-0.7){$\frac{p}{q}$}
\rput[tl](5.6,-0.7){$\frac{r}{s}$}
\rput[tl](7.7,1.25){$=$}
\end{pspicture*}
\begin{pspicture*}(-2.2,-2)(7.5,3.4)
\parametricplot[linewidth=2pt]{-3.758972595266308}{2.321589552819717}{1*1.98*cos(t)+0*1.98*sin(t)+4|0*1.98*cos(t)+1*1.98*sin(t)+1}
\parametricplot[linewidth=2pt]{-0.5735991128384956}{5.457219563177892}{1*2*cos(t)+0*2*sin(t)+1|0*2*cos(t)+1*2*sin(t)+1}
\rput[tl](-2.2,-0.7){$\frac{p}{q+mp}$}
\rput[tl](5.4,-0.7){$\frac{r+ms}{s}$}
\end{pspicture*}
\end{center}
\caption{Rolfsen move of the second kind performed on the Hopf link}
\label{Figure:2RolfsenHopf}
\end{figure}
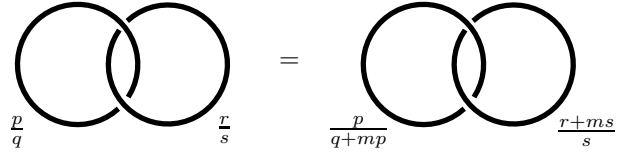

Note that using Rolfsen move of the second kind, the Hopf link with framing $(1,\frac{r}{s})$ is transformed to the Hopf link with framing $(\infty,\frac{r-s}{s})$, when we perform one full left-hand twist along the circle with framing $1$. Using the Rolfsen move of the first kind this is further transformed to the surgery along a circle with framing $\frac{r-s}{s}$, which is a surgery presentation of the lens space $L(r-s,s)$. Similarly, the Hopf link with framing $(\frac{p}{q},1)$ is transformed to a surgery presentation of the lens space $L(p-q,q)$.

Inspired by the Rolfsen move of the second kind, we define the following transformations of rational numbers. For a rational number $\frac{p}{q}\in \mathbb{Q}$ ($p$ and $q$ are coprime) and $m\in \mathbb{Z}$, we define operations
$$R_{1,m}\Big(\frac{p}{q}\Big)=\frac{p+mq}{q}, \qquad R_{-1,m}\Big(\frac{p}{q}\Big)=\frac{p}{q+mp}.$$

\begin{lem} \label{Lemma:EquivalentSurgery}
Let $m\in \mathbb{Z}$. Then,
\psset{xunit=0.4cm,yunit=0.4cm,algebraic=true,dimen=middle,dotstyle=o,dotsize=3pt 0,linewidth=0.8pt,arrowsize=3pt 2,arrowinset=0.25}
\begin{center}
\begin{pspicture*}(-3.49,-2.5)(9,3.4)
\parametricplot[linewidth=2pt]{-3.758972595266308}{2.321589552819717}{1*1.98*cos(t)+0*1.98*sin(t)+4|0*1.98*cos(t)+1*1.98*sin(t)+1}
\parametricplot[linewidth=2pt]{-0.5735991128384956}{5.457219563177892}{1*2*cos(t)+0*2*sin(t)+1|0*2*cos(t)+1*2*sin(t)+1}
\rput[tl](-1.3,-0.7){$\frac{p}{q}$}
\rput[tl](5.8,-0.7){$\frac{r}{s}$}
\rput[tl](7.7,1.25){$=$}
\end{pspicture*}
\begin{pspicture*}(-2.2,-2.5)(7.5,3.4)
\parametricplot[linewidth=2pt]{-3.758972595266308}{2.321589552819717}{1*1.98*cos(t)+0*1.98*sin(t)+4|0*1.98*cos(t)+1*1.98*sin(t)+1}
\parametricplot[linewidth=2pt]{-0.5735991128384956}{5.457219563177892}{1*2*cos(t)+0*2*sin(t)+1|0*2*cos(t)+1*2*sin(t)+1}
\rput[tl](-1.6,-1.2){$R_{\pm 1,m}\big(\frac{p}{q}\big)$}
\rput[tl](3.4,-1.2){$R_{\mp 1,m}\big(\frac{r}{s}\big)$}
\end{pspicture*}
\end{center}
\end{lem}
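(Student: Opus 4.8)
The plan is to recognize both equalities asserted in the lemma as instances of the Rolfsen move of the second kind, which preserves the homeomorphism type of the surgered $3$-manifold (this being the defining feature of Rolfsen moves, see \cite{Rolf}). The move is applicable here because both components of a Hopf link are unknotted and the absolute value of their linking number equals $1$, so that the formulas \eqref{2Rolfsen} collapse to the simplified form already recorded in the paragraph preceding the statement.

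First I would dispose of the lower-sign case by performing $m$ full twists along the first component $L_1$. As computed above, this sends the framing pair $\big(\tfrac{p}{q},\tfrac{r}{s}\big)$ to
$$\left(\frac{1}{m+q/p},\ \frac{r}{s}+m\right)=\left(\frac{p}{q+mp},\ \frac{r+ms}{s}\right)=\left(R_{-1,m}\Big(\tfrac{p}{q}\Big),\ R_{1,m}\Big(\tfrac{r}{s}\Big)\right),$$
which is exactly the lower-sign right-hand diagram. For the upper-sign case I would invoke the symmetry of the Hopf link, whose two components are interchanged by an ambient isotopy; equivalently, one applies the same move along $L_2$ rather than $L_1$. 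Repeating the computation with the roles of the two components exchanged yields
$$\left(\frac{p}{q}+m,\ \frac{1}{m+s/r}\right)=\left(\frac{p+mq}{q},\ \frac{r}{s+mr}\right)=\left(R_{1,m}\Big(\tfrac{p}{q}\Big),\ R_{-1,m}\Big(\tfrac{r}{s}\Big)\right),$$
matching the upper-sign diagram.

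The argument is almost entirely bookkeeping, so I do not expect a genuine obstacle; the one point that requires care is tracking the conventions, namely verifying that twisting \emph{along} a component produces the self-map $R_{-1,m}$ on that component while imposing the shift $R_{1,m}$ on the other, and that these roles reverse when one twists along the opposite component. I would also remark that the integrality of $m$ together with the coprimality of $p,q$ and of $r,s$ keeps all the fractions above well-formed, so that in each case the two presentations are related by a single Rolfsen move and hence describe the same $3$-manifold.
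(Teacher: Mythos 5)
Your proposal is correct and follows exactly the paper's own argument: the paper's proof is the single sentence ``This follows by using Rolfsen move of the second kind,'' with the twist-along-$L_1$ computation already recorded in the preceding paragraph, and your write-up simply makes explicit both that computation and the symmetric case obtained by twisting along $L_2$.
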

\begin{proof}
This follows by using Rolfsen move of the second kind. 
\end{proof}

Note that we have the following straightforward property of continued fractions, which will be useful in the rest of the paper:
\begin{equation} \label{Equation:Zeros}
[\underbrace{0;0,\dots,0}_{i\text{ zeros}},a_i,\dots,a_l]=\begin{cases}[a_i;\dots, a_l],\quad&\text{ $i$ even},\\
[0;a_i,\dots,a_l], \quad &\text{ $i$ odd}.
\end{cases}
\end{equation}

\begin{lem} \label{Lemma:Kirby}
Let $[a_0;a_1,\hdots,a_n]$ be a continued fraction representation of $\frac{p}{q}$ and let $m$ be an integer. Then 
\begin{enumerate}
\item[(1)] $[a_0+m; a_1,\hdots,a_n]$ is a continued fraction representation of $R_{1,m}\big(\frac{p}{q}\big)$;
\item[(2)] $[0;m,a_0,\hdots,a_n]$ is a continued fraction representation of $R_{-1,m}\big(\frac{p}{q}\big)$.
\end{enumerate}
\end{lem}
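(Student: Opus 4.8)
The plan is to verify each identity directly from the definition of the operations $R_{\pm 1,m}$ and the recursive definition of continued fractions, treating the two parts separately.

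For part (1), I would start from the given representation $\frac{p}{q}=[a_0;a_1,\hdots,a_n]$. By the recursive definition, this means $\frac{p}{q}=a_0+\frac{1}{[a_1;\hdots,a_n]}$. The claim is that $[a_0+m;a_1,\hdots,a_n]$ represents $R_{1,m}\big(\frac{p}{q}\big)=\frac{p+mq}{q}$. Applying the definition to the modified fraction gives $[a_0+m;a_1,\hdots,a_n]=(a_0+m)+\frac{1}{[a_1;\hdots,a_n]}=m+\Big(a_0+\frac{1}{[a_1;\hdots,a_n]}\Big)=m+\frac{p}{q}=\frac{p+mq}{q}$. This is essentially immediate: adding $m$ to the leading term $a_0$ simply adds $m$ to the value, since $a_0$ enters the value additively and the tail $[a_1;\hdots,a_n]$ is untouched. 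This matches the definition of $R_{1,m}$, so part (1) requires only unwinding one layer of the recursion.

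For part (2), I would again work from the value $\frac{p}{q}=[a_0;a_1,\hdots,a_n]$ and compute the value of $[0;m,a_0,\hdots,a_n]$ using the definition. Peeling off the first entry gives $[0;m,a_0,\hdots,a_n]=0+\frac{1}{[m;a_0,\hdots,a_n]}=\frac{1}{[m;a_0,\hdots,a_n]}$. Then I peel off once more: $[m;a_0,\hdots,a_n]=m+\frac{1}{[a_0;\hdots,a_n]}=m+\frac{q}{p}=\frac{mp+q}{p}$. Substituting back yields $[0;m,a_0,\hdots,a_n]=\frac{1}{\frac{mp+q}{p}}=\frac{p}{q+mp}=R_{-1,m}\big(\frac{p}{q}\big)$, as desired. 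The only subtlety is the handling of the reciprocal $\frac{1}{[a_0;\hdots,a_n]}=\frac{q}{p}$, which I would justify by noting that $[a_0;\hdots,a_n]=\frac{p}{q}$ by hypothesis.

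I do not expect any serious obstacle here, since both parts are short algebraic consequences of the recursive definition. The one place that warrants care is the boundary behavior when $\frac{p}{q}=\infty=\frac{1}{0}$, that is when $p=1,q=0$, or when an intermediate value lands on $\infty$; there the conventions $\frac{1}{0}+r=\frac{1}{0}$ and $\frac{r}{1/0}=0$ adjoined to $\mathbb{Q}$ in the introduction must be invoked to keep the manipulations valid. I would remark that these edge cases are covered precisely by those conventions, so the two identities hold uniformly on $\mathbb{Q}\cup\{\infty\}$. The convention \eqref{Equation:Zeros} could also be cited if one wishes to normalize the leading zeros that appear in part (2), but it is not strictly needed for the equality of values.
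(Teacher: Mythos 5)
Your proof is correct and follows essentially the same route as the paper: both parts reduce to the identities $R_{1,m}\big(\tfrac{p}{q}\big)=m+\tfrac{p}{q}$ and $R_{-1,m}\big(\tfrac{p}{q}\big)=\tfrac{1}{m+\frac{1}{p/q}}$, combined with the recursive definition of continued fractions, which is precisely the paper's one-line argument spelled out. Your extra remark about the $\infty$ conventions is a harmless (and reasonable) addition that the paper leaves implicit.
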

\begin{proof}
Statements of the lemma follow directly from equations 
\begin{align*}
R_{1,m}\Big(\frac{p}{q}\Big)&=m+\frac{p}{q} \quad \text{and} \quad R_{-1,m}\Big(\frac{p}{q}\Big)=\frac{p}{q+mp}=\frac{1}{m+\frac{1}{\frac{p}{q}}}. \qedhere 
\end{align*}
\end{proof}

\begin{cor} \label{Corollary:KirbyComposition}
Let $[a_0;a_1,\hdots,a_l]$ be a continued fraction representation of $\frac{p}{q}$ and let $m_1,\dots,m_n$ be integers. Then the following equations hold
\begin{align*}
R_{(-1)^n,m_n}\circ\dots\circ R_{-1,m_1}
\Big(\frac{p}{q}\Big)
&=[m_n;\dots,m_1,a_0,\dots,a_l]^{(-1)^n}, \\
R_{(-1)^{n+1},m_n}\circ\dots\circ R_{1,m_1}
\Big(\frac{p}{q}\Big) &=[m_n;\dots,m_1+a_0,\dots,a_l]^{(-1)^{n+1}}.
\end{align*}
\end{cor}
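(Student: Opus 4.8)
The plan is to prove both identities simultaneously by induction on $n$, since the two computations are mirror images of one another. The starting point is to rewrite the two operations so that they mesh with the recursive definition of a continued fraction. From the definitions one reads off
$$R_{1,m}\Big(\tfrac{p}{q}\Big)=m+\tfrac{p}{q}, \qquad R_{-1,m}\Big(\tfrac{p}{q}\Big)=\frac{1}{m+\frac{1}{p/q}},$$
and both formulas depend only on the rational value (not its representation) and remain valid on $\mathbb{Q}\cup\{\tfrac{1}{0}\}$ under the conventions $\tfrac{1}{0}+r=\tfrac{1}{0}$, $\tfrac{r}{1/0}=0$ fixed earlier. The point of this rewriting is that $R_{1,m}$ is exactly \emph{add $m$} while $R_{-1,m}$ is \emph{prepend $m$ and invert}, which are precisely the two moves hidden in the continued fraction recursion $[c_0;c_1,\dots,c_k]=c_0+\tfrac{1}{[c_1;\dots,c_k]}$.

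For the first identity, write $F_n=R_{(-1)^n,m_n}\circ\cdots\circ R_{-1,m_1}$ and let $z_n=[m_n;\dots,m_1,a_0,\dots,a_l]^{(-1)^n}$ denote the claimed value; the base case $n=1$ is Lemma \ref{Lemma:Kirby}(2) together with the identity $[0;c_0,c_1,\dots]=[c_0;c_1,\dots]^{-1}$, immediate from the defining recursion. For the inductive step I peel off the outermost operation, $F_n=R_{(-1)^n,m_n}\circ F_{n-1}$, and on the right-hand side peel off the leading entry via $[m_n;m_{n-1},\dots]=m_n+\tfrac{1}{[m_{n-1};\dots]}$. Splitting into the two parities then finishes it: when $n$ is even the exponent $(-1)^n$ is trivial, so $z_n=m_n+z_{n-1}=R_{1,m_n}(z_{n-1})$; when $n$ is odd the exponent is an inversion, so $z_n=\tfrac{1}{m_n+1/z_{n-1}}=R_{-1,m_n}(z_{n-1})$. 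In each case the peeled operation $R_{(-1)^n,m_n}$ matches exactly the corresponding rewritten form above, and applying the inductive hypothesis $F_{n-1}(\tfrac{p}{q})=z_{n-1}$ closes the loop.

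The second identity is proved by the identical induction, the only change being the base case: here $n=1$ uses Lemma \ref{Lemma:Kirby}(1), which absorbs $m_1$ into $a_0$ and produces the leading entry $m_1+a_0$ rather than a fresh prepended $m_1$, explaining both the shifted shape $[m_n;\dots,m_1+a_0,\dots,a_l]$ and the shifted exponent $(-1)^{n+1}$. The inductive step is word-for-word the same parity analysis, since for $n\ge 2$ the two families of operations and continued fractions obey the same peel-off recursions.

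I expect the only delicate point to be the parity bookkeeping: one must track carefully that the exponent toggles between the identity and inversion in lockstep with the alternation of $R_{1,\cdot}$ and $R_{-1,\cdot}$ in the composition, so that at each step the correct operation is paired with the correct rewritten form. A secondary point worth verifying explicitly is that all intermediate quantities behave consistently when they equal $\tfrac{1}{0}$, which is guaranteed by the conventions above; no genuinely new difficulty arises there.
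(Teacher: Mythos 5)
Your proof is correct and follows essentially the same route as the paper's: induction on $n$, with the base case supplied by Lemma \ref{Lemma:Kirby} (via the identity $[0;c_0,c_1,\dots]=[c_0;c_1,\dots]^{-1}$) and the inductive step matching the outermost operation $R_{\pm 1,m_n}$ against the leading entry of the continued fraction. The paper merely states the $n=1$ and $n=2$ cases and says "induction," so your explicit parity analysis and the remark on the $\tfrac{1}{0}$ conventions simply spell out the details the paper leaves implicit.
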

\begin{proof}
Let us prove the first equality (the second being proved in a similar fashion). Using Lemma \ref{Lemma:Kirby}, we have
$$R_{-1,m_1}\Big(\frac{p}{q}\Big)=[0;m_1,a_0,\dots,a_l]
=[m_1,a_0,\dots,a_l]^{-1}.$$
For $n= 2$ we have
$$R_{1,m_2}\circ R_{-1,m_1}\Big(\frac{p}{q}\Big) =[m_2;m_1,a_0,\dots,a_l],$$
and the proof follows by induction on $n$.
\end{proof}

\begin{cor} \label{Corollary:Composition}
Let $[a_0;a_1,\dots,a_l]$ be a continued fraction representation of $\frac{p}{q}$ and $0\leq i \leq l-1$. Then 
$$R_{(-1)^{i},-a_i}\circ \dots \circ R_{1,-a_0} \Big( \frac{p}{q}\Big)=[\underbrace{0;0,\dots,0}_{i+1 \text{ zeros}},a_{i+1},\dots,a_l]=[a_{i+1};\dots,a_l]^{(-1)^{i+1}}.$$
\end{cor}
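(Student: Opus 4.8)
The plan is to prove the first (left-hand) equality by induction on the length of the string of operations, and then to deduce the second equality directly from the zero-collapsing identity \eqref{Equation:Zeros}. Concretely, I would introduce the intermediate quantities
$$x_k = R_{(-1)^k,-a_k}\circ\dots\circ R_{1,-a_0}\Big(\frac{p}{q}\Big), \qquad 0\le k\le i,$$
and prove the \emph{reduced} statement $x_k=[a_{k+1};\dots,a_l]^{(-1)^{k+1}}$ by induction on $k$. The target is then the case $k=i$, and rewriting $[a_{i+1};\dots,a_l]^{(-1)^{i+1}}$ as the string of $i+1$ zeros followed by $a_{i+1},\dots,a_l$ is exactly \eqref{Equation:Zeros}, which simultaneously yields the claimed second equality.

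The computations in the induction are cleanest when carried out on the numbers themselves rather than on symbolic continued fractions, using the two elementary reformulations $R_{1,m}(w)=w+m$ and $R_{-1,m}(w)=\tfrac{1}{m+1/w}$ of the definitions, together with the defining recursion $[a_{k+1};a_{k+2},\dots,a_l]=a_{k+1}+\tfrac{1}{[a_{k+2};\dots,a_l]}$. For the base case $k=0$ one has $x_0=R_{1,-a_0}(p/q)=\tfrac pq-a_0=\tfrac{1}{[a_1;\dots,a_l]}=[a_1;\dots,a_l]^{-1}$, which is the asserted formula. For the inductive step from $k$ to $k+1$ I would split according to the parity of $k+1$. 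Writing $y=[a_{k+1};\dots,a_l]$ and $z=[a_{k+2};\dots,a_l]$, the recursion gives $z^{-1}=y-a_{k+1}$. If $k+1$ is even, then $x_k=y$ and the next operation is $R_{1,-a_{k+1}}$, so $x_{k+1}=y-a_{k+1}=z^{-1}=z^{(-1)^{k+2}}$; if $k+1$ is odd, then $x_k=y^{-1}$ and the next operation is $R_{-1,-a_{k+1}}$, so $x_{k+1}=\tfrac{1}{-a_{k+1}+y}=z=z^{(-1)^{k+2}}$. In both cases $x_{k+1}=[a_{k+2};\dots,a_l]^{(-1)^{k+2}}$, which closes the induction.

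The conceptual content, and what makes the step transparent, is the choice of the right inductive invariant: the reduced power form $[a_{k+1};\dots,a_l]^{(-1)^{k+1}}$ rather than the unwieldy box-of-zeros form. With this invariant the alternating operations simply peel off one coefficient and reciprocate at each stage, and the exponent $(-1)^{k+1}$ exactly records the parity. The one point that requires care, and which I regard as the main (if minor) obstacle, is the degenerate behaviour when an intermediate value equals $\infty=\tfrac10$ (for instance when a tail $[a_{k+2};\dots,a_l]$ is an integer, forcing $z=\infty$, or when some $a_{k+1}=0$); there I would invoke the conventions $\tfrac10+r=\tfrac10$ and $\tfrac{r}{1/0}=0$ fixed in the introduction to check that $R_{1,m}(w)=w+m$, $R_{-1,m}(w)=\tfrac{1}{m+1/w}$ and the recursion all remain valid, so the two cases above go through verbatim.

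Finally, applying \eqref{Equation:Zeros} with $i+1$ zeros converts $x_i=[a_{i+1};\dots,a_l]^{(-1)^{i+1}}$ into $[\,\underbrace{0;0,\dots,0}_{i+1},a_{i+1},\dots,a_l\,]$: for $i+1$ even both equal $[a_{i+1};\dots,a_l]$, while for $i+1$ odd the box collapses to $[0;a_{i+1},\dots,a_l]=[a_{i+1};\dots,a_l]^{-1}$. This establishes both equalities at once and completes the argument.
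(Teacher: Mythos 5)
Your proof is correct, and its skeleton---induction on the number of operations applied, peeling off one coefficient per step---is the same as the paper's. The execution is genuinely different, though: the paper runs its induction through Lemma \ref{Lemma:Kirby}, manipulating continued-fraction \emph{representations} symbolically (each $R_{-1,m}$ prepends $0;m$, each $R_{1,m}$ shifts the leading term), with the box-of-zeros form as the running object; its inductive step rests on the cancellation $[0;-a_1,0,a_1,a_2,\dots,a_l]=[a_2;\dots,a_l]$ together with \eqref{Equation:Zeros}. You instead carry the reduced invariant $[a_{k+1};\dots,a_l]^{(-1)^{k+1}}$ and compute with the \emph{values}, using $R_{1,m}(w)=w+m$, $R_{-1,m}(w)=\tfrac{1}{m+1/w}$ and the defining recursion, invoking \eqref{Equation:Zeros} only once at the very end. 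What the paper's route buys is uniformity with the surrounding symbolic calculus (Lemma \ref{Lemma:Kirby} and the zero-collapsing identity are the stated tools of Section \ref{Section:Calculus}, and the box-of-zeros form is the one quoted in the corollary); what yours buys is a fully written-out inductive step (the paper verifies only $i=0,1$ and appeals to induction) and an explicit treatment of the degenerate values $0$ and $\infty=\tfrac{1}{0}$, which the paper passes over in silence and which your parity computation does handle correctly under the conventions of the introduction. One small inaccuracy in your aside: a tail $[a_{k+2};\dots,a_l]$ being an integer does not force $z=\infty$; rather $z=\infty$ exactly when $[a_{k+3};\dots,a_l]=0$, equivalently when $y$ equals the integer $a_{k+1}$. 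This does not affect the argument, since the verification you sketch covers all the degenerate cases that can actually occur.
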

\begin{proof}
By Lemma \ref{Lemma:Kirby} we have $R_{1,-a_0}\big(\frac{p}{q}\big)=[0;a_1,\dots,a_l]$. For $i=1$, by using \eqref{Equation:Zeros} we obtain
\begin{align*}
R_{-1,-a_1}\circ R_{1,-a_0}\Big(\frac{p}{q}\Big)&=[0;-a_1,0,a_1,a_2,\dots,a_l]=[a_2;\dots,a_l]\\
&=[0;0,a_2,\dots,a_l]. 
\end{align*}
Now the claim of the corollary follows by induction on $i$.
\end{proof}

\begin{cor}
Let $[a_0;a_1,\dots,a_l+1]$ be a continued fraction representation of $\frac{p}{q}$ (note that the last term of this representation is written as $a_l+1$ and not as $a_l$ like in the previous statements). Then 
$$R_{(-1)^{l},-a_{l}}\circ \dots \circ R_{1,-a_0}\Big(\frac{p}{q}\Big)=1.$$
\end{cor}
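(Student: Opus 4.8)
The plan is to reduce the statement to Corollary \ref{Corollary:Composition} by regarding the last entry $a_l+1$ as a single term of a continued fraction, and then to finish with one explicit application of the terminal Rolfsen operation.

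First I would set $c_i=a_i$ for $0\le i\le l-1$ and $c_l=a_l+1$, so that $[c_0;c_1,\dots,c_l]$ is exactly the given representation $[a_0;a_1,\dots,a_l+1]$ of $\frac pq$. Applying Corollary \ref{Corollary:Composition} to this representation with $i=l-1$ (this requires $l\ge 1$; the case $l=0$ is immediate, since then the ``composition'' is the single map $R_{1,-a_0}$ applied to $\frac pq=a_0+1$, which gives $1$) yields
$$R_{(-1)^{l-1},-a_{l-1}}\circ\cdots\circ R_{1,-a_0}\Big(\frac pq\Big)=[c_l]^{(-1)^{l}}=(a_l+1)^{(-1)^{l}}.$$
This accounts for the entire composition except the outermost map $R_{(-1)^{l},-a_l}$, so it remains only to apply that map to the value $(a_l+1)^{(-1)^l}$.

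Next I would carry out this last step by cases on the parity of $l$, using the explicit formulas $R_{1,m}(x)=x+m$ and $R_{-1,m}(x)=\dfrac{1}{m+1/x}$ recorded just before Lemma \ref{Lemma:Kirby}. If $l$ is even, the intermediate value is $a_l+1$ and the outer map is $R_{1,-a_l}$, so $R_{1,-a_l}(a_l+1)=(a_l+1)-a_l=1$. If $l$ is odd, the intermediate value is $\frac{1}{a_l+1}$ and the outer map is $R_{-1,-a_l}$, so $R_{-1,-a_l}\big(\tfrac1{a_l+1}\big)=\dfrac{1}{-a_l+(a_l+1)}=1$. In both cases the result is $1$, which is the desired identity.

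I do not expect any genuine obstacle: the content is entirely bookkeeping, and the only points requiring care are tracking the sign $(-1)^l$ correctly and treating the degenerate case $l=0$ separately (where Corollary \ref{Corollary:Composition} does not apply and one argues directly). Conceptually, the computation makes the role of the ``$+1$'' transparent: it is exactly the surplus that the terminal shift $R_{1,-a_l}$ (when $l$ is even), or its reciprocal counterpart $R_{-1,-a_l}$ (when $l$ is odd), absorbs so as to land precisely at $1$ rather than at $a_l^{(-1)^l}$, as would happen for the representation $[a_0;a_1,\dots,a_l]$ in Corollary \ref{Corollary:Composition}.
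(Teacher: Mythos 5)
Your proof is correct and follows essentially the same route as the paper's own argument: both reduce to Corollary \ref{Corollary:Composition} applied through the index $l-1$ (treating $a_l+1$ as the final entry), handle $l=0$ separately as a trivial case, and finish with the two computations $R_{1,-a_l}(a_l+1)=1$ and $R_{-1,-a_l}\big(\tfrac{1}{a_l+1}\big)=1$. The explicit relabelling $c_i$ and the parity bookkeeping you add are just a more detailed write-up of the identical argument.
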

\begin{proof}
Since for $l=0$, the statement of the corollary is trivial, let us suppose that $l>0$. By Corollary \ref{Corollary:Composition}, we have
$$R_{(-1)^{l-1},-a_{l-1}}\circ \dots \circ R_{1,-a_0}\Big(\frac{p}{q}\Big)=\begin{cases}
a_l+1,\quad &\text{ $l$ even}\\
\frac{1}{a_l+1},\quad & \text{ $l$ odd}.
\end{cases}$$
We finish the proof by noting that $R_{1,-a_l}(a_l+1)=1$ and $R_{-1,-a_l}\big(\frac{1}{a_l+1}\big)=1$. 
\end{proof}

\section{The main results}\label{Section:Main}

In this section we prove Theorem \ref{Theorem:Basic}.

\begin{prop} 
Let $\frac{p}{q}=[a_0;a_1,\hdots,a_n]$ and $\frac{r}{s}=[b_0;b_1,\hdots;b_m+1]$, and let $k_1,\dots, k_l\in \mathbb{Z}$. Then
\psset{xunit=0.4cm,yunit=0.4cm,algebraic=true,dimen=middle,dotstyle=o,dotsize=3pt 0,linewidth=0.8pt,arrowsize=3pt 2,arrowinset=0.25}
\begin{center}
\begin{pspicture*}(-3.49,-2.3)(9,3.4)
\parametricplot[linewidth=2pt]{-3.758972595266308}{2.321589552819717}{1*1.98*cos(t)+0*1.98*sin(t)+4|0*1.98*cos(t)+1*1.98*sin(t)+1}
\parametricplot[linewidth=2pt]{-0.5735991128384956}{5.457219563177892}{1*2*cos(t)+0*2*sin(t)+1|0*2*cos(t)+1*2*sin(t)+1}
\rput[tl](-1.3,-0.7){$\frac{p}{q}$}
\rput[tl](5.8,-0.7){$\frac{r}{s}$}
\rput[tl](7.7,1.25){$=$}
\end{pspicture*}
\begin{pspicture*}(-2.2,-2.3)(7.5,3.4)
\parametricplot[linewidth=2pt]{-3.758972595266308}{2.321589552819717}{1*1.98*cos(t)+0*1.98*sin(t)+4|0*1.98*cos(t)+1*1.98*sin(t)+1}
\parametricplot[linewidth=2pt]{-0.5735991128384956}{5.457219563177892}{1*2*cos(t)+0*2*sin(t)+1|0*2*cos(t)+1*2*sin(t)+1}
\rput[tl](-1.3,-0.7){$\frac{c}{d}$}
\rput[tl](5.8,-0.7){$\frac{e}{f}$}
\end{pspicture*}
\end{center}
where $\frac{c}{d}=[k_l;\dots,k_1,a_0,\dots,a_n]^{(-1)^l}$ and $\frac{e}{f}=[k_l;\dots,k_1+b_0,\dots,b_m]^{(-1)^{l+1}}$. In particular, 
\psset{xunit=0.4cm,yunit=0.4cm,algebraic=true,dimen=middle,dotstyle=o,dotsize=3pt 0,linewidth=0.8pt,arrowsize=3pt 2,arrowinset=0.25}
\begin{center}
\begin{pspicture*}(-3.49,-2.3)(9,3.4)
\parametricplot[linewidth=2pt]{-3.758972595266308}{2.321589552819717}{1*1.98*cos(t)+0*1.98*sin(t)+4|0*1.98*cos(t)+1*1.98*sin(t)+1}
\parametricplot[linewidth=2pt]{-0.5735991128384956}{5.457219563177892}{1*2*cos(t)+0*2*sin(t)+1|0*2*cos(t)+1*2*sin(t)+1}
\rput[tl](-1.3,-0.7){$\frac{p}{q}$}
\rput[tl](5.8,-0.7){$\frac{r}{s}$}
\rput[tl](7.7,1.25){$=$}
\end{pspicture*}
\begin{pspicture*}(-2.2,-2.3)(7.5,3.4)
\parametricplot[linewidth=2pt]{-3.758972595266308}{2.321589552819717}{1*1.98*cos(t)+0*1.98*sin(t)+4|0*1.98*cos(t)+1*1.98*sin(t)+1}
\parametricplot[linewidth=2pt]{-0.5735991128384956}{5.457219563177892}{1*2*cos(t)+0*2*sin(t)+1|0*2*cos(t)+1*2*sin(t)+1}
\rput[tl](-1.3,-0.7){$\frac{a}{b}$}
\rput[tl](5.8,-0.7){$1$}
\end{pspicture*}
\end{center}
where $\frac{a}{b}=[-b_m;\hdots,-b_0,a_0,
\hdots,a_n]^{(-1)^{m+1}}$.
\end{prop}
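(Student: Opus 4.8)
The plan is to pass from the left-hand surgery diagram to the right-hand one by iterating Lemma \ref{Lemma:EquivalentSurgery} exactly $l$ times, once for each of the integers $k_1,\dots,k_l$, and then to read off the two resulting framings from the composition formulas in Corollary \ref{Corollary:KirbyComposition}. At the $j$-th step I perform a Rolfsen move of the second kind with integer $k_j$, choosing its orientation so that the first component is acted on by $R_{(-1)^j,k_j}$; by the $\pm/\mp$ pairing built into Lemma \ref{Lemma:EquivalentSurgery}, the second component is then automatically acted on by $R_{(-1)^{j+1},k_j}$. Thus for odd $j$ the first component sees $R_{-1,k_j}$ and the second sees $R_{1,k_j}$, while for even $j$ the roles of the signs are reversed; in every case the two components receive opposite signs, which is precisely what Lemma \ref{Lemma:EquivalentSurgery} allows.

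After all $l$ moves the first component has been subjected to the composition $R_{(-1)^l,k_l}\circ\dots\circ R_{-1,k_1}$, so the first equation of Corollary \ref{Corollary:KirbyComposition}, applied to $\frac{p}{q}=[a_0;a_1,\dots,a_n]$, gives
$$\frac{c}{d}=[k_l;\dots,k_1,a_0,\dots,a_n]^{(-1)^l}.$$
At the same time the second component has been subjected to $R_{(-1)^{l+1},k_l}\circ\dots\circ R_{1,k_1}$, so the second equation of Corollary \ref{Corollary:KirbyComposition}, applied to $\frac{r}{s}=[b_0;b_1,\dots,b_m+1]$, modifies only the leading entry and carries the remaining entries over unchanged, giving
$$\frac{e}{f}=[k_l;\dots,k_1+b_0,b_1,\dots,b_m+1]^{(-1)^{l+1}}.$$
This establishes the general statement.

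For the ``in particular'' claim I specialize to $l=m+1$ and $k_j=-b_{j-1}$ for $1\le j\le m+1$. Substituting these values into the formula for $\frac{c}{d}$ immediately turns the first framing into
$$\frac{a}{b}=[-b_m;\dots,-b_0,a_0,\dots,a_n]^{(-1)^{m+1}},$$
as asserted. With the same substitution the composition acting on the second component becomes exactly $R_{(-1)^m,-b_m}\circ\dots\circ R_{1,-b_0}$ applied to $\frac{r}{s}=[b_0;b_1,\dots,b_m+1]$, and the last corollary of Section \ref{Section:Calculus} --- whose hypothesis is precisely that the final continued-fraction entry be written in the form $b_m+1$ --- shows that this framing equals $1$.

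The step I expect to need the most care is the sign bookkeeping in the first two paragraphs: one must verify that a single alternating choice of orientations for the $l$ Rolfsen moves simultaneously feeds the first component into the ``$R_{-1,\cdot}$-innermost'' composition of Corollary \ref{Corollary:KirbyComposition} and the second component into the complementary ``$R_{1,\cdot}$-innermost'' composition, with the exponents $(-1)^l$ and $(-1)^{l+1}$ emerging correctly. Checking that these two sign sequences are exactly opposite at every step --- so that both equations of Corollary \ref{Corollary:KirbyComposition} become applicable at once --- is the heart of the argument.
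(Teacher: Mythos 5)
Your proof is correct and follows essentially the same route as the paper's: iterate Lemma \ref{Lemma:EquivalentSurgery} to act on the two components by opposite-sign compositions $R_{(-1)^l,k_l}\circ\dots\circ R_{-1,k_1}$ and $R_{(-1)^{l+1},k_l}\circ\dots\circ R_{1,k_1}$, read off both framings via Corollary \ref{Corollary:KirbyComposition}, and then specialize $k_j=-b_{j-1}$. You are in fact slightly more explicit than the paper, which leaves the final step implicit, in invoking the last corollary of Section \ref{Section:Calculus} to conclude that the second framing becomes $1$.
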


\begin{proof}
By Lemma \ref{Lemma:EquivalentSurgery}, the surgery $\big(\frac{p}{q},\frac{r}{s}\big)$ is equivalent to the surgery
$$\Big(R_{(-1)^l,k_l}\circ\dots\circ R_{-1,k_1}\Big(\frac{p}{q}\Big),R_{(-1)^{l+1},k_l}\circ\dots\circ R_{1,k_1}\Big(\frac{r}{s}\Big)\Big),$$
which after application of Corollary \ref{Corollary:KirbyComposition} yields the first part of the proposition. The rest of the statement follows when we take for $k_1,\hdots,k_l$ to be $-b_0,\hdots,-b_m$, respectively. 
\end{proof}

\begin{proof}[Proof of Theorem \ref{Theorem:Basic}] Proof of the main theorem now easily follows from the previous proposition. A surgery on a Hopf link with framing $\frac{p}{q}$ and $\frac{r}{s}$ is equivalent to a surgery on the same link with framing $\frac{a}{b}$ and $1$, where $\frac{a}{b}$ is given above. The last move we need to make is one twist in negative direction along the circle with framing $1$.
\end{proof}

\begin{cor}\label{cor: sphere}
Let $\frac{p}{q}, \frac{r}{s}\in \mathbb{Q}$ such that $pr-qs=\pm1$. Then, the $3$-manifold resulting from surgery on Hopf link with coefficients $\frac{p}{q}$ and $\frac{r}{s}$ is $\mathbb{S}^3$. 
\end{cor}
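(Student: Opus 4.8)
The plan is to feed suitable continued fraction expansions into Theorem \ref{Theorem:Basic} and then check that the resulting lens space $L(a,b)$ has $|a|=1$, since among lens spaces only $\mathbb{S}^3=L(1,0)$ has trivial first homology (equivalently, surgery on an unknot with coefficient $\frac{a}{b}$ produces $\mathbb{S}^3$ exactly when $a=\pm1$ or $\frac{a}{b}=\frac{1}{0}$). First I would note that the hypothesis $pr-qs=\pm1$ already forces $\gcd(p,q)=\gcd(r,s)=1$, because any common divisor of $p,q$ (resp. of $r,s$) divides $pr-qs=\pm1$; thus $\frac{p}{q}$ and $\frac{r}{s}$ are automatically in lowest terms and admit expansions $\frac{p}{q}=[a_0;\dots,a_n]$ and $\frac{r}{s}=[b_0;\dots,b_m+1]$ as demanded by the theorem. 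It then suffices to prove that the numerator of
$$\frac{a}{b}=[-b_m;\dots,-b_0,a_0,\dots,a_n]^{(-1)^{m+1}}-1$$
is $\pm(pr-qs)=\pm1$.

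To compute that numerator I would pass to the matrix model of continued fractions: with $M(x)=\begin{pmatrix} x & 1\\ 1 & 0\end{pmatrix}$, the value $[c_0;\dots,c_k]$ is the ratio of the two entries in the first column of $M(c_0)\cdots M(c_k)$. The block $M(a_0)\cdots M(a_n)$ has first column $(p,q)^{T}$ and second column the penultimate convergent of $\frac{p}{q}$, while the reversed, negated block $M(-b_m)\cdots M(-b_0)$ can be rewritten using the two identities $M(-x)=-JM(x)J$ with $J=\mathrm{diag}(-1,1)$ and (transpose $=$ reversal) $M(b_m)\cdots M(b_0)=(M(b_0)\cdots M(b_m))^{T}$. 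This re-expresses the $b$-block through $r,s$ and the penultimate convergent $\frac{\hat r}{\hat s}=[b_0;\dots,b_{m-1}]$ of $\frac{r}{s}$, the trailing $+1$ contributing only the single elementary factor $\begin{pmatrix} 1 & 0\\ 1 & 1\end{pmatrix}$. Multiplying the two blocks, the first column of the product comes out proportional to $\big((pr-qs)-D,\ -D\big)$ with $D=\hat r\,p-\hat s\,q$, so that $C:=[-b_m;\dots,-b_0,a_0,\dots,a_n]=1-\frac{pr-qs}{D}$.

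Finally I would substitute $pr-qs=\pm1$ and split on the parity of $m$. If $m$ is odd the exponent $(-1)^{m+1}$ is $+1$ and $\frac{a}{b}=C-1=-\frac{pr-qs}{D}$; if $m$ is even the exponent is $-1$ and $\frac{a}{b}=C^{-1}-1=\frac{pr-qs}{D-(pr-qs)}$, with the degenerate case $D=0$ read off directly from the conventions $\frac{1}{0}+r=\frac{1}{0}$ and $\frac{r}{1/0}=0$. In every case the numerator is $pr-qs=\pm1$, hence $L(a,b)=\mathbb{S}^3$, which is the claim. As a conceptual check, this is exactly the statement that the surgered manifold has $|H_1|=\bigl|\det\left(\begin{smallmatrix} p & q\\ s & r\end{smallmatrix}\right)\bigr|=|pr-qs|$, so in fact $|a|=|pr-qs|$ in general.

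I expect the only real obstacle to be the continued fraction bookkeeping in the middle step: keeping the signs straight through the negation identity $M(-x)=-JM(x)J$, correctly handling the asymmetric trailing $+1$ in the expansion of $\frac{r}{s}$, and tracking the global parity factor $(-1)^{m+1}$. The underlying mechanism is clean — concatenating the reversed $b$-block with the $a$-block makes the interior telescope and leaves only the crossed product $pr-qs$ — but confirming that no stray sign or off-by-one in the convergent indexing survives is where the care must go.
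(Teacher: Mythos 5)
Your proposal is correct, but it takes a genuinely different route from the paper's proof. The paper uses the hypothesis $pr-qs=\pm1$ \emph{before} invoking Theorem~\ref{Theorem:Basic}: by two standard continued-fraction facts (Lemmas~\ref{lem: rat func} and~\ref{lem: int solutions}, following Khinchin), the integer solutions of $px-qy=\pm1$ are exactly the one-term extensions of the standard expansion of $\frac{p}{q}$, so that $\frac{r}{s}=[0;a_0,\dots,a_n,l]$ for some $l\in\mathbb{Z}$; with this adapted expansion the continued fraction of Theorem~\ref{Theorem:Basic} telescopes, $[-l+1;-a_n,\dots,-a_0,0,a_0,\dots,a_n]=[-l+1;0]=\frac{1}{0}$, and both parities then give $\mathbb{S}^3$ (via $L(1,0)$ or $L(1,-1)$). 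You instead keep both expansions arbitrary and prove a stronger, more general fact: the numerator $a$ produced by Theorem~\ref{Theorem:Basic} always satisfies $|a|=|pr-qs|$. Your matrix computation is right: using $M(-x)=-JM(x)J$, reversal-by-transposition, and the factorization of the trailing $b_m+1$, the first column of $M(-b_m)\cdots M(-b_0)M(a_0)\cdots M(a_n)$ is proportional to $\bigl((pr-qs)-D,\,-D\bigr)$ with $D=\hat{r}p-\hat{s}q$, so $C=1-\frac{pr-qs}{D}$, and $\frac{a}{b}$ equals $-\frac{pr-qs}{D}$ for $m$ odd and $\frac{pr-qs}{D-(pr-qs)}$ for $m$ even; as a check, the paper's first example gives $D=174$ and $\frac{a}{b}=-\frac{431}{257}$, matching $L(431,174)$. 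What your route buys: it identifies $|a|$ with the order of $H_1$ of the surgered manifold for \emph{all} rational framings, so the sphere criterion becomes the transparent condition $|H_1|=1$, and it needs neither standard (unique) expansions nor the Diophantine lemma. What the paper's route buys: a much shorter argument, staying entirely inside the continued-fraction calculus it has already set up, with the two auxiliary lemmas quotable from the literature and no matrix or sign bookkeeping. If you write yours up, make two points explicit: each matrix block determines its columns only up to an overall sign, which is harmless because you only ever use ratios (and the final numerator up to sign); and a fraction with numerator $\pm1$ is automatically in lowest terms, so the normalization in Theorem~\ref{Theorem:Basic} indeed yields $a=\pm1$, and $L(\pm1,b)\cong\mathbb{S}^3$.
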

Before proving this result, we recall some of the well known facts about continued fractions. We say that a continued fraction $[a_0;a_1,\dots,a_n]$ is \textit{standard} if $a_1,\dots,a_n$ are positive integers and $a_n>1$. For such a standard continued fraction we put $\frac{p_i}{q_i}:=[a_0;a_1,\dots,a_i]$, $i=1,\dots,n$, assuming $p_i$ and $q_i$ to be coprime. Then, every rational number can be represented as a standard continued fraction in a unique way.

\begin{lem}\label{lem: rat func}
Let $[a_0;a_1,\dots,a_n]$ be a standard continued fraction and let $x$ be a variable. Then, we have an equality of rational functions:
\begin{equation}\label{eq: rat func}
[a_0;a_1,\dots,a_n,x]=\frac{p_nx+p_{n-1}}{q_nx+q_{n-1}},
\end{equation}
where the left-hand side of the previous equality is defined inductively as $a_0+\frac{1}{x}$ if $n=0$ and $[a_0;a_1,\dots,a_{n-1},\frac{a_n+1}{x}]$, otherwise.
\end{lem}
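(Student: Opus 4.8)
The plan is to prove \eqref{eq: rat func} by induction on $n$, following the inductive definition of the left-hand side and using the standard recurrences for the convergents, namely $p_i=a_ip_{i-1}+p_{i-2}$ and $q_i=a_iq_{i-1}+q_{i-2}$ (with the usual conventions $p_{-1}=1$, $q_{-1}=0$, $p_0=a_0$, $q_0=1$), which I take as a known property of the numbers $p_i,q_i$ introduced before the lemma. For the base case $n=0$, the left-hand side is $a_0+\frac1x=\frac{a_0x+1}{x}$ by definition, while the right-hand side is $\frac{p_0x+p_{-1}}{q_0x+q_{-1}}=\frac{a_0x+1}{x}$, so the two agree.

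For the inductive step, assume the identity holds for continued fractions of length $n-1$, as an equality of rational functions in a free variable $t$:
$$[a_0;a_1,\dots,a_{n-1},t]=\frac{p_{n-1}t+p_{n-2}}{q_{n-1}t+q_{n-2}}.$$
By the inductive definition, appending $x$ after $a_n$ collapses the last two slots into the single value $t=a_n+\frac1x=\frac{a_nx+1}{x}$, that is, $[a_0;\dots,a_n,x]=[a_0;\dots,a_{n-1},t]$. Specializing the induction hypothesis to this $t$ and clearing the common factor $x$ from numerator and denominator gives
$$[a_0;\dots,a_n,x]=\frac{p_{n-1}(a_nx+1)+p_{n-2}x}{q_{n-1}(a_nx+1)+q_{n-2}x}=\frac{(a_np_{n-1}+p_{n-2})x+p_{n-1}}{(a_nq_{n-1}+q_{n-2})x+q_{n-1}}.$$
Invoking the recurrences $p_n=a_np_{n-1}+p_{n-2}$ and $q_n=a_nq_{n-1}+q_{n-2}$ rewrites the right-hand side as $\frac{p_nx+p_{n-1}}{q_nx+q_{n-1}}$, which closes the induction.

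The step that needs the most care is the reduction $[a_0;\dots,a_n,x]=[a_0;\dots,a_{n-1},t]$: one must be sure that the value entering the last slot is $t=a_n+\frac1x$ (equivalently $\frac{a_nx+1}{x}$), since this is precisely what makes the nested fraction contract correctly, and one must check that the induction hypothesis, being an identity of rational functions in a free variable, may legitimately be specialized to the rational function $t$, which is fine because every denominator in play remains a nonzero element of $\mathbb{Q}(x)$. A second, minor point is that the truncation $[a_0;\dots,a_{n-1}]$ need not itself be standard (its last partial quotient could equal $1$); this is harmless, because the recurrences, and hence the identity, depend only on the partial quotients $a_0,\dots,a_{n-1}$ and not on any normalization, so the hypothesis applies verbatim.

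As a conceptual check, the whole statement is the Möbius-transformation form of the matrix identity $\left(\begin{smallmatrix}a_0&1\\1&0\end{smallmatrix}\right)\cdots\left(\begin{smallmatrix}a_n&1\\1&0\end{smallmatrix}\right)=\left(\begin{smallmatrix}p_n&p_{n-1}\\q_n&q_{n-1}\end{smallmatrix}\right)$ acting on $x$ via $\left(\begin{smallmatrix}A&B\\C&D\end{smallmatrix}\right)\cdot x=\frac{Ax+B}{Cx+D}$; this gives an alternative and slightly slicker route to the same formula, although it simply repackages the same inductive content.
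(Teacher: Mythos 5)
Your proof is correct and takes essentially the same route as the paper: the paper's entire proof is the remark that the identity ``can be proved in the same manner as Theorem 1 in \cite{Khin}'', and that argument is precisely the induction you write out---contracting the last two slots via $t=a_n+\frac{1}{x}$ and invoking the convergent recurrences $p_n=a_np_{n-1}+p_{n-2}$, $q_n=a_nq_{n-1}+q_{n-2}$. Your extra care (legitimacy of specializing a rational-function identity, and the truncation $[a_0;a_1,\dots,a_{n-1}]$ possibly failing to be standard) just makes explicit what the paper delegates to the reader.
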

\begin{proof}
If we denote by $A(x)$ and $B(x)$ the left-hand and right-hand functions of \eqref{eq: rat func}, respectively, then the result can be proved in the same manner as Theorem 1 in \cite{Khin}.  
\end{proof}
\begin{lem}\label{lem: int solutions}
Let $\frac{p}{q}\in\mathbb{Q}$ ($p$ and $q$ coprime) and let $[a_0;a_1,\dots,a_n]$ be its standard continued fraction. If integers $x, y$ satisfy $px-qy=1$ (resp. $px-qy=-1$), then, $\frac{y}{x}=[a_0;a_1,\dots,a_n,l]$, for some integer $l$.
\end{lem}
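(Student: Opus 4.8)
The plan is to combine the convergent formula of Lemma~\ref{lem: rat func} with the classical determinant identity for convergents of a standard continued fraction. Writing $\frac{p_i}{q_i}=[a_0;a_1,\dots,a_i]$ (coprime, $q_i>0$) for the convergents, we have $p_n=p$, $q_n=q$, and the identity
$$p_nq_{n-1}-p_{n-1}q_n=(-1)^{n-1},$$
which follows by induction from the recurrences $p_i=a_ip_{i-1}+p_{i-2}$ and $q_i=a_iq_{i-1}+q_{i-2}$. Specialising the variable in \eqref{eq: rat func} to an integer $l$ gives
$$[a_0;a_1,\dots,a_n,l]=\frac{pl+p_{n-1}}{ql+q_{n-1}},$$
and since $p(ql+q_{n-1})-q(pl+p_{n-1})=pq_{n-1}-qp_{n-1}=(-1)^{n-1}$, this fraction is automatically in lowest terms. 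Thus the whole statement reduces to solving a linear Diophantine equation.

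First I would treat the sign $px-qy=(-1)^{n-1}$. By the determinant identity the pair $(q_{n-1},p_{n-1})$ is a particular solution, and since $\gcd(p,q)=1$ every integer solution is of the form $(x,y)=(q_{n-1}+ql,\,p_{n-1}+pl)$ for a unique $l\in\mathbb{Z}$. For such $(x,y)$ we read off
$$\frac{y}{x}=\frac{p_{n-1}+pl}{q_{n-1}+ql}=[a_0;a_1,\dots,a_n,l],$$
which is the desired conclusion.

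To cover the opposite sign $px-qy=-(-1)^{n-1}$, I would repeat the argument with the particular solution $(-q_{n-1},-p_{n-1})$, so that a general solution reads $(x,y)=(ql-q_{n-1},\,pl-p_{n-1})$. Multiplying numerator and denominator of the convergent formula by $-1$,
$$\frac{pl-p_{n-1}}{ql-q_{n-1}}=\frac{p(-l)+p_{n-1}}{q(-l)+q_{n-1}}=[a_0;a_1,\dots,a_n,-l],$$
so $\frac{y}{x}$ again has the required shape, now with last entry $-l$. Matching these two computations against the prescribed right-hand sides according to the parity of $n$ (so that $(-1)^{n-1}$ equals $+1$ or $-1$ as needed) settles both cases simultaneously.

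The argument is essentially routine once the determinant identity is available; the only delicate point is the bookkeeping of signs, i.e.\ matching the constant $(-1)^{n-1}$ and its negative to the prescribed values $\pm1$, and verifying that negating the parameter $l$ still yields an admissible integer last entry. I expect this mild sign/parity bookkeeping to be the main obstacle, with no substantive difficulty beyond it.
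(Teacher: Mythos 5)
Your proof is correct and takes essentially the same route as the paper's: a particular solution of $px-qy=\pm1$ coming from the convergent determinant identity (the paper cites Khinchin's Theorem~2 where you prove it by induction from the recurrences), the standard parametrization of all solutions of the linear Diophantine equation using $\gcd(p,q)=1$, and then an application of Lemma~\ref{lem: rat func} at an integer value of the variable. The only cosmetic difference is in the sign bookkeeping: the paper absorbs the sign into the particular solution $x_0=(-1)^{n+1}q_{n-1}$, $y_0=(-1)^{n+1}p_{n-1}$ (resp.\ $(-1)^{n}q_{n-1}$, $(-1)^{n}p_{n-1}$), whereas you split into cases by the parity of $n$ and replace $l$ by $-l$; these are the same argument.
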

\begin{proof}
One solution for $px-qy=1$ (resp. $px-qy=-1$) is given by $x_0=(-1)^{n+1}q_{n-1}$ and $y_0=(-1)^{n+1}p_{n-1}$ (resp. $x_0=(-1)^{n}q_{n-1}$ and $y_0=(-1)^{n}p_{n-1}$), by Theorem 2 in \cite{Khin}. Then, all the solutions are given by $x_0+lq_n$ and $y_0+lp_n$, for respective equations and respective $x_0$ and $y_0$. In particular, there exists an integer $l$, such that $\frac{y}{x}=\frac{p_nl+p_{n-1}}{q_nl+q_{n-1}}$. The result follows by Lemma \ref{lem: rat func}.
\end{proof}

\begin{proof}[Proof of Corollary \ref{cor: sphere}]
Let $[a_0;a_1,\dots,a_n]$ be standard continued fraction for $\frac{p}{q}$. By Lemma \ref{lem: int solutions} there exists an integer $l$ such that $\frac{r}{s}=[0;a_0,a_1,\dots,a_n,l]$. Hence, we have 
\begin{align*}
[-l+1;-a_n,\dots,-a_1,-a_0,0,&a_0,a_1,\dots,a_n]^{(-1)^{n}}-1\\
&=[-l+1;0]^{(-1)^n}-1 =\begin{cases}\frac{1}{0},\quad \text{\hspace{2mm}for $n$ even,}\\
\frac{1}{-1}, \quad \text{for $n$ odd.}
\end{cases}
\end{align*}
Now we can apply Theorem \ref{Theorem:Basic} and finish by noting that the resulting lens spaces $L(1,0)$ and $L(1,-1)$ are both homeomorphic to $\mathbb{S}^3$. 
\end{proof}

{\bf Acknowledgements.} The authors are grateful to Vladimir Gruji\' c and Zoran Petri\' c for suggesting the problem studied here, and for helpful discussions during the course of research and writing this paper. Also, we would like to thank the anonymous referee for useful comments which improved the presentation of the paper.

{\small
}

{\small
{\em Authors' addresses}:
{\em Velibor Bojkovi\' c}, Laboratoire de Math\' ematiques Nicolas Oresme, Caen, France, e-mail: \texttt{velibor.bojkovic@\allowbreak unicaen.fr};
{\em Jovana Nikoli\' c}, University of Belgrade, Faculty of Mathematics, Belgrade, Serbia, e-mail: \texttt{jovanadj@\allowbreak matf.bg.ac.rs};
{\em Mladen Zeki\' c} (corresponding author), Mathematical Institute of the Serbian Academy of Sciences and Arts, Belgrade, Serbia, e-mail: \texttt{mzekic@\allowbreak mi.sanu.ac.rs}.
}

\end{document}